\newtheorem{theorem}{Theorem}[section]
\newtheorem{corollary}[theorem]{Corollary}
\newtheorem{lemma}[theorem]{Lemma}
\newtheorem{proposition}[theorem]{Proposition}
\theoremstyle{definition}
\newtheorem{remark}[theorem]{Remark}
\theoremstyle{remark}
\numberwithin{equation}{section}
\newcommand{\calT}{\mathcal{T}}
\newcommand{\calS}{\mathcal{S}}
\newcommand{\R}{\mathds{R}}
\newcommand{\ol}{\overline}
\numberwithin{equation}{section}
\theoremstyle{definition}
\newcommand{\dsP}{\mathds{P}}
\newcommand{\bX}{\mathbf{x}}
\newcommand{\bW}{\mathbf{W}}
\newcommand{\bC}{\mathbf{C}}
\newcommand{\bV}{\mathbf{V}}
\newcommand{\bv}{\mathbf{v}}
\newcommand{\bM}{\mathbf{M}}
\title[Collision location]{Collision location for hard spheres\\ in stationary regime}
\author{Krzysztof Burdzy and Shuntao Chen}
\address{Department of Mathematics, Box 354350, University of Washington, Seattle, WA 98195}
\email{burdzy@uw.edu}
\email{shuntao1994@gmail.com}
\thanks{Research supported in part by Simons Foundation Grant 928958. }
\begin{document}

\begin{abstract}
Consider two balls with radius $r>0$ whose centers are at a distance $2$, positioned symmetrically with respect to the origin in $\R^d$.
Suppose that the initial velocities are independent standard normal vectors. 
When $r\to0$, the collision probability goes to 0 as $r^{d-1}$, and the asymptotic collision location
distribution is a (defective) $t$-distribution. This distribution is rotationally symmetric about the origin
for no apparent reason.
\end{abstract}

\maketitle

\section{Introduction}\label{intro}

Physics principles known as ``equidistribution of energy'' and  ``microcanonical ensemble formula'' (see \cite[Chaps. 2-3]{BP}) suggest that velocities of gas molecules in the air are i.i.d. rotationally invariant normal vectors.
Given this assumption, we will find the collision location for two ``randomly'' chosen molecules.
If the gas is dilute, i.e., the distance between a molecule and its nearest neighbor is large relative to its diameter, the probability of a collision of two given molecules is very small. Nevertheless, the probability is non-zero, assuming strictly positive radii $r$ of the molecules. We will 
find asymptotic formulas for the probability and location of a collision of two molecules, assuming a fixed initial distance, as $r\to0$.

We will assume that two molecules are represented by balls initially positioned symmetrically with respect to the origin in $\R^d$. We will suppose that their initial velocities are independent standard normal vectors. 
The cross-section of a ball has a volume of order $r^{d-1}$, so heuristically, the collision probability should be of order $r^{d-1}$. Not surprisingly, this is the case; we will give a rigorous and exact formula for the collision probability. 

To our surprise, we discovered that as $r\to0$,
the asymptotic collision location distribution is a multi-dimensional $t$-distribution for $d\geq 2$. 
This distribution is rotation invariant, i.e., invariant under the action of the orthogonal group $O(d)$, for no apparent reason. The model is not invariant under $O(d)$. It turns out that the collision location is rotation invariant if the moving objects are strictly convex, the union of the two objects is centrally symmetric with respect to the origin at the initial time, and the initial velocities are independent standard normal vectors.

This paper is based on a chapter in the second author's Ph.D. thesis \cite{thesis}. 

\section{Model and main results}\label{sec.collision.1}

Suppose that $d\geq 1$, $\bM_1,\bM_2\subset \R^d$ are closed, $\bM_1\cap \bM_2=\emptyset$, $\bM_1\cup \bM_2$ is centrally symmetric, i.e. $x\in \bM_1\cup \bM_2$ if an only if $-x\in \bM_1\cup \bM_2$, and $\bM_1 $ and $ \bM_2$ are strictly convex. 

We do not assume that $\bM_1 $ and $ \bM_2$  are bounded. Hence, for $d=2$, we could take $\bM_1=\{(x,y): y\geq x^2+1\}$, and make $\bM_2$ the symmetric set.

Suppose that $\bM_1$ and $\bM_2$ are moving with random velocities $\bV_1$ and $\bV_2$. Let $\bM_1(s)=\bM_1 + s\bV_1$ and $\bM_2(s)=\bM_2+s\bV_2$.
Let the collision time and location be defined by $T=\inf\{t\geq 0: \bM_1(t)\cap \bM_2(t)\ne\emptyset\} $
and $\{\bC\}=\bM_1(T)\cap \bM_2(T)$. Note that $\bM_1(T)\cap \bM_2(T)$ contains only one point by the assumption of strict convexity of $\bM_1 $ and $ \bM_2$.
We will write $T<\infty$ to indicate that there is a collision.

\begin{theorem}\label{j2.1}
If the distribution of $(\bV_1, \bV_2)/|(\bV_1, \bV_2)|$ is uniform on the unit sphere in $\R^{2d}$, then the conditional distribution of $\bC$ given $T<\infty$ is rotation invariant, i.e., invariant under the action of the orthogonal group $O(d)$.
\end{theorem}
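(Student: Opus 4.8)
The plan is to reduce the statement to one clean identity for the collision point and then exploit a Gaussian symmetry.

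First I would pin down the geometry. Since $\bM_1\cup\bM_2$ is centrally symmetric, $\bM_1\cap\bM_2=\emptyset$, and both sets are convex (hence connected) and closed (hence separated), the connected set $-\bM_1\subseteq\bM_1\cup\bM_2$ must lie entirely in one of the two components; ruling out $-\bM_1\subseteq\bM_1$ (which would force $0\in\bM_1\cap\bM_2$) leaves $\bM_2=-\bM_1$. Write $M=\bM_1$ and $W=\bV_2-\bV_1$. A point lies in $\bM_1(t)\cap\bM_2(t)$ iff it equals $x+t\bV_1=-y+t\bV_2$ for some $x,y\in M$, i.e. iff $x+y=tW$. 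As $M$ is convex, $\{x+y:x,y\in M\}=M+M=2M$, so $\bM_1(t)\cap\bM_2(t)\ne\emptyset$ iff $tW\in 2M$. Hence both $T=\inf\{t\ge 0: tW\in 2M\}$ and the event $\{T<\infty\}$ depend on the velocities only through $W$.

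Next I would extract the collision location. Because $0\notin 2M$ and $2M$ is closed, at $t=T$ the point $TW$ lies on $\prt(2M)$, so $TW=x^*+y^*$ with $x^*,y^*\in M$; strict convexity forces $x^*=y^*=TW/2\in\prt M$, since a boundary point of a strictly convex set is not the midpoint of two distinct points of the set. The contact point is therefore $\bC=x^*+T\bV_1=\tfrac{T}{2}W+T\bV_1=\tfrac{T}{2}(\bV_1+\bV_2)$. Writing $S=\bV_1+\bV_2$, I obtain the key identity $\bC=\tfrac{T}{2}\,S$, in which $S$ is a direction in $\R^d$ while the scalar $T/2$ is a function of $W$ alone.

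Finally I would run the probabilistic argument. The map $(\bV_1,\bV_2)\mapsto\bC$ is invariant under positive scaling of $(\bV_1,\bV_2)$, as is the event $\{T<\infty\}$; since a standard Gaussian vector in $\R^{2d}$ has uniform direction, I may assume without loss of generality that $\bV_1,\bV_2$ are independent $N(0,I_d)$. Then $S=\bV_1+\bV_2$ and $W=\bV_2-\bV_1$ are jointly Gaussian and uncorrelated, hence independent, with $S\sim N(0,2I_d)$ rotation invariant. As $\{T<\infty\}$ is an event about $W$ only, conditioning on it leaves the law of $S$ unchanged and keeps $S$ independent of $\tau:=T/2$. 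Thus, conditionally on $T<\infty$, $\bC=\tau S$ is the product of a rotation-invariant vector $S$ with an independent scalar $\tau$, so for every $R\in O(d)$ one has $R\bC=\tau(RS)\overset{d}{=}\tau S=\bC$ (condition on $\tau$ and use $RS\overset{d}{=}S$). This is exactly rotation invariance. The conceptual crux, and the source of the ``no apparent reason,'' is the combination of the exact identity $\bC=\tfrac{T}{2}(\bV_1+\bV_2)$ with the elementary but decisive fact that for Gaussian velocities the sum $S$ and the difference $W$ are independent, which decouples the rotation-invariant direction of $\bC$ from the shape-dependent travel time. I expect the main technical obstacle to be the geometric step: justifying $\bM_2=-\bM_1$, the identity $M+M=2M$, and the unique-midpoint contact rigorously for possibly unbounded strictly convex bodies; once $\bC=\tfrac{T}{2}S$ is in hand, the probability is immediate.
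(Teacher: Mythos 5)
Your argument is correct, and it is a genuinely different proof from the one in the paper. The paper works in $\R^{2d}$: it encodes a collision as the ray $t\mapsto(\bV_1,\bV_2)t$ hitting the set $A=\{(y+z,y-z):y\in\R^d,\ z\in\bM_2\}$, with $\bC=y$ at the first hit; it then checks that $\calT_g(y+z,y-z)=(g(y)+z,g(y)-z)$ is an isometry of $A\cup\{\mathbf{0}\}$ for each $g\in O(d)$, and concludes because the radial projections of $A_B$ and $\calT_g(A_B)$ onto the unit sphere of $\R^{2d}$ have equal surface measure. That route never identifies the contact point explicitly, uses only the uniform-direction hypothesis with no Gaussian structure, and also yields the refinement recorded in the remark about $A_{B_1,B_2}$. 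You instead derive the exact identity $\bC=\tfrac{T}{2}(\bV_1+\bV_2)$ with $T$ (and the event $\{T<\infty\}$) a function of $W=\bV_2-\bV_1$ alone, reduce to independent standard normals via the degree-zero homogeneity of $\bC$ and $\{T<\infty\}$ in $(\bV_1,\bV_2)$, and finish with the independence of the sum and difference of i.i.d.\ Gaussians. This is precisely the center-of-mass decomposition the paper deploys later, in the proof of Theorem \ref{thm_high_dim} for balls (there $\bC=\ol\bV T$ with $T$ determined by $\bV^c$); you show that it already proves Theorem \ref{j2.1} in full generality once the geometric bookkeeping is done ($\bM_2=-\bM_1$, $M+M=2M$ by convexity, and the unique-midpoint contact from strict convexity, which in passing re-derives the paper's claim that $\bM_1(T)\cap\bM_2(T)$ is a singleton). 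Your version is shorter once the identity for $\bC$ is in hand and it demystifies the ``no apparent reason'' symmetry; the one step to state more carefully is the parenthetical claim that $-\bM_1\subseteq\bM_1$ forces $0\in\bM_1\cap\bM_2$: from $-\bM_1\subseteq\bM_1$ one first gets $\bM_1=-\bM_1$, hence (by central symmetry of the union and disjointness) $\bM_2=-\bM_2$, and only then does convexity place $0$ in both sets.
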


This gives us immediately the following corollary.

\begin{corollary}\label{j2.2}
    If $\bV_1$ and $ \bV_2$ are independent standard $d$-dimensional normal $\mathcal{N}\left(\mathbf{0}, I_d\right)$ random vectors
then the conditional distribution of $\bC$ given $T<\infty$ is rotation invariant.
\end{corollary}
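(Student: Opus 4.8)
The plan is to derive the corollary directly from Theorem~\ref{j2.1}. The theorem requires only that the normalized velocity direction $(\bV_1,\bV_2)/|(\bV_1,\bV_2)|$ be uniform on the unit sphere in $\R^{2d}$, so it suffices to check that independent standard normals produce this hypothesis, and to confirm that the quantities of interest depend on the velocities only through their common direction.

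First I would record the scale-invariance of the dynamics. For any constant $c>0$, replacing $(\bV_1,\bV_2)$ by $(c\bV_1,c\bV_2)$ leaves both the event $\{T<\infty\}$ and the collision location $\bC$ unchanged: since $\bM_i(s)=\bM_i+s\bV_i$, the scaled trajectory $\bM_i+sc\bV_i=\bM_i+(cs)\bV_i$ sweeps out exactly the same family of configurations, merely reparametrized in time, so a collision occurs at the same point, with only the collision time rescaled by $1/c$. Hence $\bC$ and $\I_{\{T<\infty\}}$ are measurable functions of the direction $(\bV_1,\bV_2)/|(\bV_1,\bV_2)|$ alone.

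Second, since $\bV_1$ and $\bV_2$ are independent $\mathcal{N}(\mathbf{0},I_d)$ vectors, the concatenation $(\bV_1,\bV_2)$ is a standard normal vector in $\R^{2d}$; its law is invariant under $O(2d)$, it is almost surely nonzero, and therefore its normalization is uniform on the unit sphere in $\R^{2d}$. This is precisely the hypothesis of Theorem~\ref{j2.1}. Combining the two observations, the conditional law of $\bC$ given $T<\infty$ is determined by the law of the direction $(\bV_1,\bV_2)/|(\bV_1,\bV_2)|$, which is the uniform one, so the theorem applies verbatim and delivers the claimed $O(d)$-invariance.

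The proof is essentially immediate, and the only step requiring genuine care --- the ``main obstacle,'' mild as it is --- is the first one: verifying that the collision geometry sees only the ray spanned by $(\bV_1,\bV_2)$ and is insensitive to its length. The remainder is the standard fact that a Gaussian vector normalizes to the uniform distribution on the sphere.
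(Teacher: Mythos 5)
Your proposal is correct and matches the paper, which simply notes that the corollary follows immediately from Theorem~\ref{j2.1} because the concatenation $(\bV_1,\bV_2)$ of two independent standard $d$-dimensional normals is a standard normal in $\R^{2d}$, so its normalization is uniform on the unit sphere. Your additional remark on scale-invariance of the dynamics is sound but not needed, since Theorem~\ref{j2.1} already assumes only the law of the direction.
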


\begin{remark}
Unfortunately, Corollary \ref{j2.2} seems to contain the only interesting explicit example to which Theorem \ref{j2.1} applies. More precisely, if $\bV_1$ and $ \bV_2$ are independent and the distribution of $(\bV_1, \bV_2)$ is rotation invariant then $\bV_1$ and $ \bV_2$ are independent standard $d$-dimensional normal $\mathcal{N}\left(\mathbf{0}, I_d\right)$, or a constant multiple of such random vectors. We sketch a proof of this claim kindly provided by W\l odek Bryc.
By rotation invariance, $(\bV_1+ \bV_2)/\sqrt{2}$ and $(\bV_1-\bV_2)/\sqrt{2}$ are independent and have the same distributions as $\bV_1$. 
The Bernstein Theorem (see \cite[Thm.~5.1.1]{Bryc}) implies that for any linear functional $A$, random variables $A(\bV_1)$ and $A(\bV_1)$ are normal. It follows that $\bV_1$ and $ \bV_2$ are multidimensional normal. By rotation invariance, all components of $\bV_1$ and $ \bV_2$ have the same variance, and all pairs of components have the same covariance. The off-diagonal covariances are all zero because  $\bV_1$ and $ \bV_2$ are independent. This completes the proof.
\end{remark}

In the rest of this section, we will assume that

(i) $\bM_1 $ and $ \bM_2$ are ``molecules'' in the shape of balls with radii $r\in(0,1)$ and centers
$\bX_1 = (-1, 0, \cdots, 0)$ and $ \bX_2 = (1, 0, \cdots, 0)$, and

(ii) $\bV_1$ and $ \bV_2$ are independent standard $d$-dimensional normal $\mathcal{N}\left(\mathbf{0}, I_d\right)$ random vectors.

Our first result is a simple explicit criterion for a collision.

\begin{proposition}\label{collision-criterion}
If $d\geq 1$ and the initial velocities of $\bM_1$ and $\bM_2$ are non-random vectors $\bv_1$ and $\bv_2$
then the balls  will collide if and only if
$$\cos \beta \leq -\sqrt{1-r^2},$$
where $\beta$ is the angle between the vectors $\bX_1-\bX_2$ and $\bv_1-\bv_2$.
\end{proposition}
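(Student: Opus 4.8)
The plan is to reduce the collision condition to a one-dimensional minimization in the relative motion. Two balls of radius $r$ with centers $\bX_1 + s\bv_1$ and $\bX_2 + s\bv_2$ intersect precisely when the distance between their centers is at most $2r$. Writing $\bz = \bX_1 - \bX_2$ and $\mathbf{w} = \bv_1 - \bv_2$, I would therefore study the squared center-to-center distance
$$
g(s) = |\bz + s\mathbf{w}|^2 = |\bz|^2 + 2s\langle \bz, \mathbf{w}\rangle + s^2|\mathbf{w}|^2,
$$
and note that a collision occurs for some $s \ge 0$ if and only if $\min_{s\ge 0} g(s) \le 4r^2$. Here $|\bz| = 2$, so $g(0) = 4$.

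Next I would locate the minimum of this upward-opening parabola on $[0,\infty)$. If $\mathbf{w} = \mathbf{0}$ the centers stay at distance $2$, and since $r<1$ there is no collision (and $\beta$ is undefined), so I may assume $\mathbf{w}\ne\mathbf{0}$. The unconstrained minimizer is $s^\ast = -\langle \bz,\mathbf{w}\rangle/|\mathbf{w}|^2$. Because $g(0) = 4 > 4r^2$ (using $r<1$), a collision is possible only if $g$ decreases to the right of $0$, i.e.\ only if $s^\ast > 0$, equivalently $\langle \bz, \mathbf{w}\rangle < 0$. When $\langle \bz,\mathbf{w}\rangle \ge 0$ the function $g$ is nondecreasing on $[0,\infty)$, so its minimum is $g(0) = 4 > 4r^2$ and there is no collision. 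When $\langle \bz,\mathbf{w}\rangle < 0$ the minimum over $[0,\infty)$ is attained at $s^\ast$ and equals
$$
g(s^\ast) = |\bz|^2 - \frac{\langle \bz,\mathbf{w}\rangle^2}{|\mathbf{w}|^2}.
$$

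The last step is a trigonometric substitution. Writing $\cos\beta = \langle \bz,\mathbf{w}\rangle/(|\bz|\,|\mathbf{w}|)$ for the angle between $\bz = \bX_1 - \bX_2$ and $\mathbf{w} = \bv_1 - \bv_2$, and using $|\bz| = 2$, gives $\langle \bz,\mathbf{w}\rangle^2/|\mathbf{w}|^2 = 4\cos^2\beta$, hence $g(s^\ast) = 4(1 - \cos^2\beta) = 4\sin^2\beta$. The collision criterion $g(s^\ast) \le 4r^2$ then reads $\sin^2\beta \le r^2$, i.e.\ $\cos^2\beta \ge 1 - r^2$. Combining this with the sign condition $\cos\beta < 0$ found above yields exactly $\cos\beta \le -\sqrt{1-r^2}$, as claimed.

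I do not expect a serious obstacle here, as the computation is elementary. The one point requiring care is the constraint $s \ge 0$: the naive ``vertex value'' formula for the minimum is valid only when $s^\ast$ lies in $[0,\infty)$, and it is precisely the case $\langle \bz,\mathbf{w}\rangle \ge 0$ (vertex at $s^\ast \le 0$) that must be split off and shown to give no collision. This sign bookkeeping is what converts the symmetric condition $\cos^2\beta \ge 1-r^2$ into the one-sided inequality $\cos\beta \le -\sqrt{1-r^2}$.
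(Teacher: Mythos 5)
Your proof is correct and is essentially the same as the paper's: both reduce the problem to whether the quadratic $s\mapsto|(\bX_1-\bX_2)+s(\bv_1-\bv_2)|^2$ reaches $4r^2$ at some nonnegative time. You phrase this as minimizing the parabola on $[0,\infty)$ (vertex location plus vertex value), while the paper uses the discriminant together with Vieta's formulas to control the sign of the roots; these are equivalent computations, and your handling of the degenerate case $\bv_1=\bv_2$ is a small point the paper leaves implicit.
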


Let $p_{r,d}$ denote the probability that  $\bM_1$ and $\bM_2$  collide. 
We have $p_{r,1} = 1/2$ because, in $d=1$, there will be a collision if and only if $\bV_1>\bV_2$.

\begin{theorem}\label{collision-prob}
For $d\geq 2$,
\begin{align}\label{m4.1}
    p_{r,d}&= \frac{1}{2}F\left(\frac{1}{d-1}\cdot\frac{r^2}{1-r^2}; d-1, 1\right)\\
&= \frac{1}{2}\int_{0}^{\frac{1}{d-1}\cdot \frac{r^2}{1-r^2}} \frac{\Gamma\left(\frac{d}{2} \right)}{\Gamma\left(\frac{d-1}{2}\right)\sqrt{\pi}}\left(d-1\right)^{\frac{d-1}{2}}x^{\frac{d-3}{2}}\left(1+\left(d-1\right)x\right)^{-\frac{d}{2}} dx,\label{m18.1}
\end{align}
where $F(\,\cdot\,; d-1, 1)$ is the CDF of the $F$-distribution with $d-1$ and $1$ degrees of freedom.
It follows that
\begin{align}\label{m4.2}
    p_{r,d} = \frac{1}{d-1}\cdot\frac{\Gamma\left(\frac{d}{2}\right)}{\Gamma\left(\frac{d-1}{2}\right)\sqrt{\pi}}r^{d-1}+o(r^{d-1}).
\end{align}
\end{theorem}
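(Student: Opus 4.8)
The plan is to translate the geometric collision criterion of Proposition~\ref{collision-criterion} into a tail event for a ratio of sums of squared Gaussians, and then recognize that ratio as an $F$-statistic. First I would introduce $\bW = \bV_1 - \bV_2$. Since $\bV_1$ and $\bV_2$ are independent $\calN(\mathbf{0}, I_d)$ vectors, the coordinates $W_1, \dots, W_d$ of $\bW$ are i.i.d.\ $\calN(0,2)$ (the scale is irrelevant, as only the direction of $\bW$ enters). Because $\bX_1 - \bX_2 = (-2, 0, \dots, 0)$, the angle $\beta$ in Proposition~\ref{collision-criterion} satisfies $\cos\beta = -W_1/|\bW|$. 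Hence the collision event $\{\cos\beta \leq -\sqrt{1-r^2}\}$ is exactly $\{W_1 > 0\} \cap \{W_1^2/|\bW|^2 \geq 1-r^2\}$, the first condition being forced because the threshold $-\sqrt{1-r^2}$ is negative. Writing $S = W_2^2 + \cdots + W_d^2$ and using $|\bW|^2 = W_1^2 + S$, elementary algebra rewrites the second condition as $S/W_1^2 \leq r^2/(1-r^2)$.

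Next I would identify the distribution. As the $W_j$ are i.i.d.\ centered Gaussians, $W_1^2$ and $S$ are independent, $W_1^2$ is a (scaled) $\chi^2_1$ variable and $S$ is a (scaled) $\chi^2_{d-1}$ variable, so
\[
\frac{S}{(d-1)\,W_1^2} \sim F(d-1, 1).
\]
The collision event therefore equals $\{W_1 > 0\} \cap \{ S/((d-1)W_1^2) \leq \frac{1}{d-1}\cdot\frac{r^2}{1-r^2}\}$. Since the second event depends only on $(|W_1|, S)$ while the sign of $W_1$ is symmetric and independent of $(|W_1|, S)$, conditioning on $(|W_1|, S)$ contributes a factor $\tfrac12$, giving
\[
p_{r,d} = \tfrac12\, \P\!\left(\tfrac{S}{(d-1)W_1^2} \leq \tfrac{1}{d-1}\cdot\tfrac{r^2}{1-r^2}\right) = \tfrac12\, F\!\left(\tfrac{1}{d-1}\cdot\tfrac{r^2}{1-r^2};\, d-1, 1\right),
\]
which is \eqref{m4.1}. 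Substituting the explicit $F(d-1,1)$ density, whose normalizing constant is $\Gamma(d/2)/(\Gamma((d-1)/2)\Gamma(1/2))$ with $\Gamma(1/2)=\sqrt\pi$, yields the integral \eqref{m18.1} directly.

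Finally, for the asymptotics \eqref{m4.2}, I would set $a_r = \frac{1}{d-1}\cdot\frac{r^2}{1-r^2}$, which tends to $0$ as $r \to 0$. On $[0, a_r]$ the factor $(1+(d-1)x)^{-d/2} = 1 + O(x)$ is uniformly close to $1$, so the integral in \eqref{m18.1} is $\frac{\Gamma(d/2)}{\Gamma((d-1)/2)\sqrt\pi}(d-1)^{(d-1)/2}\big(\frac{2}{d-1}a_r^{(d-1)/2} + O(a_r^{(d+1)/2})\big)$. Using $a_r^{(d-1)/2} = (d-1)^{-(d-1)/2} r^{d-1}(1+o(1))$, the powers of $(d-1)$ cancel and the leading term collapses to $\frac{1}{d-1}\cdot\frac{\Gamma(d/2)}{\Gamma((d-1)/2)\sqrt\pi} r^{d-1}$, as claimed. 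The conceptually delicate point is really just the reduction in the first two paragraphs; the main place to exercise care is bookkeeping—tracking the factor $\tfrac12$ coming from the sign of $W_1$ and verifying that the $\Gamma$-factors and powers of $(d-1)$ combine to give exactly the stated constant—since everything else is a routine Taylor expansion of the integrand near the origin.
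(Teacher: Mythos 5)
Your proposal is correct and follows essentially the same route as the paper: reduce the collision criterion to the event $\{W_1>0\}\cap\{S/W_1^2\le r^2/(1-r^2)\}$ for $\bW=\bV_1-\bV_2$, extract the factor $\tfrac12$ from the sign symmetry of $W_1$, and identify $S/((d-1)W_1^2)$ as an $F(d-1,1)$ statistic. The only cosmetic difference is in deriving \eqref{m4.2}: you Taylor-expand the integrand near $0$, while the paper applies L'H\^opital's rule to $p_{r,d}/r^{d-1}$; both yield the same constant.
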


\begin{remark}\label{collition-prob-approx-special}
In this and the next remark, 
we will discuss the special cases $d=2,3$ because these dimensions are physically relevant, and the corresponding formulas are simple.
Substituting $d=2,3$ into \eqref{m4.2}, we  get
$p_{r,2} = \frac{1}{\pi}r + o(r)$ and $ p_{r,3} = \frac{1}{4}r^2+o(r^2)$.
Actually, we can compute exact values of $p_{r,2}$ and $p_{r,3}$.
If we substitute $d=2, 3$ in \eqref{m18.1}, we get
\begin{align}\label{m18.3}
p_{r,2} &= \frac{1}{2}\int_{0}^{\frac{r^2}{1-r^2}} \frac{1}{\pi}\cdot\frac{1}{\sqrt{x}\left(1+x\right)}dx=\frac{1}{\pi}\arctan \sqrt{x}\bigg{|}_{0}^{\frac{r^2}{1-r^2}}\\
&=\frac{1}{\pi}\arctan \sqrt{\frac{r^2}{1-r^2}}=\frac{r}{\pi} + O(r^2),\label{m18.4}\\
p_{r,3} &= \frac{1}{2}\int_{0}^{\frac{r^2}{2(1-r^2)}} (1+2x)^{-\frac{3}{2}}dx= -\frac{1}{2}(1+2x)^{-\frac{1}{2}}\bigg{|}_{0}^{\frac{r^2}{2(1-r^2)}}\label{m18.5}\\
&=\frac{1}{2}-\frac{1}{2}\left(1+\frac{r^2}{1-r^2}\right)^{-\frac{1}{2}}=\frac{1-\sqrt{1-r^2}}{2}=\frac{1}{4}r^2+O(r^4).\label{m18.6}
\end{align}

\end{remark}

Next, we turn to the collision location.

\begin{theorem}\label{density-1-dim}
For $d=1$, 
\begin{equation}\label{formula-1-dim}
\dsP(\bC\leq x) = \int_{-\infty}^{x} \frac{1-r}{2\pi\left(u^2 + \left(1-r\right)^2\right)} du.
\end{equation}

\end{theorem}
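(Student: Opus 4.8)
My plan is to reduce everything to an explicit formula for $\bC$ in terms of $\bV_1,\bV_2$ and then recognize a (half-)Cauchy law. First I would pin down the geometry: in dimension $d=1$ the two balls are the intervals $\bM_1=[-1-r,-1+r]$ and $\bM_2=[1-r,1+r]$, so at time $s$ the right endpoint of $\bM_1(s)$ is $-1+r+s\bV_1$ and the left endpoint of $\bM_2(s)$ is $1-r+s\bV_2$. A collision at a positive time occurs exactly when these endpoints meet, i.e. when $s(\bV_1-\bV_2)=2-2r$ has a positive solution, which forces $\bV_1>\bV_2$ (consistent with $p_{r,1}=1/2$ and with Proposition~\ref{collision-criterion}). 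Solving for the collision time $T=(2-2r)/(\bV_1-\bV_2)$ and substituting gives the location
\begin{equation*}
\bC=-1+r+T\bV_1=(1-r)\,\frac{\bV_1+\bV_2}{\bV_1-\bV_2},
\end{equation*}
valid on the event $\{\bV_1>\bV_2\}=\{T<\infty\}$.

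Next I would diagonalize the Gaussian pair. Setting $U=(\bV_1+\bV_2)/\sqrt2$ and $W=(\bV_1-\bV_2)/\sqrt2$, rotational invariance of the standard bivariate normal makes $U$ and $W$ independent $\mathcal N(0,1)$ variables, and the formula above becomes $\bC=(1-r)\,U/W$ with $\{T<\infty\}=\{W>0\}$. Thus the whole problem is the law of the scaled ratio $U/W$ restricted to the half-space $\{W>0\}$.

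The core computation is then the sub-distribution of $U/W$ on $\{W>0\}$. The quickest route is to use the rotational symmetry of $(U,W)$: the event $\{U/W\le t,\ W>0\}$ is an angular sector at the origin, so its probability is its opening angle divided by $2\pi$; in polar coordinates this is $(\pi-\operatorname{arccot} t)/2\pi$, and differentiating in $t$ yields the sub-density $\tfrac{1}{2\pi(1+t^2)}$---exactly half of the standard Cauchy density. (Equivalently, $U/W$ is standard Cauchy, and the two reflections $(U,W)\mapsto(-U,W)$ and $(U,W)\mapsto(U,-W)$ show that the conditional law on $\{W>0\}$ is again symmetric and carries mass $\tfrac12$, i.e. half the Cauchy density.) The factor $\tfrac12$ is precisely the collision-probability defect $p_{r,1}=1/2$. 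Finally I would push this sub-density through the linear map $t\mapsto(1-r)t$, which rescales the argument and inserts a Jacobian $1/(1-r)$, giving the sub-density $\tfrac{1-r}{2\pi(u^2+(1-r)^2)}$ of $\bC$ and hence \eqref{formula-1-dim} after integrating.

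I expect the only real obstacle to be bookkeeping rather than anything deep: correctly identifying which endpoints collide and getting the sign of $\bV_1-\bV_2$ right so that the restriction lands on $\{W>0\}$, and then being careful that the resulting sub-density integrates to $\tfrac12$ rather than $1$, since $\bC$ is defined only on the collision event and \eqref{formula-1-dim} is a defective distribution.
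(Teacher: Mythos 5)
Your proof is correct and is essentially the same argument as the paper's: the paper also passes to the coordinates $\bV_1+\bV_2$ and $\bV_1-\bV_2$ (via the map $(x,y)\mapsto(y+x,y-x)$ applied to the trajectory of the two centers) and invokes the uniform-angle/Cauchy hitting lemma, which is exactly your ratio-of-independent-normals computation. The only cosmetic difference is that you derive the closed form $\bC=(1-r)(\bV_1+\bV_2)/(\bV_1-\bV_2)$ directly, while the paper tracks the center $\bW_1(T)$ and shifts by $r$ at the end.
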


\begin{theorem}\label{thm_high_dim}
 Suppose $d\geq2$ and  $A\subset \mathbb{R}^d$ is a bounded Borel set with a positive distance from the origin.  Then
\begin{equation}\label{m18.2}
\lim_{r\to 0}
\dsP(\bC\in A) r^{1-d} =
\frac{ \pi ^{-(d+1)/2}  \Gamma (d) }
{2\Gamma(d/2+1/2) }
 \int_{A}  \frac{1}{\left(1+|x|^2 \right)^d} d x .
\end{equation}

\end{theorem}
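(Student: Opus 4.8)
The plan is to reduce the whole computation to the two independent Gaussian vectors $\bU=\tfrac12(\bV_1+\bV_2)$ and $\bW=\bV_1-\bV_2$, obtain an exact formula for $\bC$ in terms of these, and then perform a change of variables that exposes the $r^{d-1}$ scaling before passing to the limit. The starting point is a kinematic identity. Because the two balls have equal radii and touch externally, the contact point at the collision time $T$ is the midpoint of the two centers; using $\bX_1+\bX_2=\mathbf 0$ this gives the exact relation
\[
\bC = T\,\bU .
\]
Solving $|\bX_1-\bX_2+t\bW|^2=4r^2$, i.e. $t^2|\bW|^2-4tW_1+4(1-r^2)=0$ where $W_1$ is the first coordinate of $\bW$, and taking the smaller positive root yields
\[
T=\frac{2\bigl(W_1-\sqrt{\,W_1^2-(1-r^2)|\bW|^2}\,\bigr)}{|\bW|^2},
\]
valid exactly on the collision event, which by Proposition \ref{collision-criterion} is $\{W_1>0,\ |\bw'|\le \tfrac{r}{\sqrt{1-r^2}}W_1\}$ in the decomposition $\bW=(W_1,\bw')$ with $\bw'\in\R^{d-1}$. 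A direct covariance computation shows that $\bU$ and $\bW$ are independent, with $\bU\sim\calN(\mathbf 0,\tfrac12 I_d)$ and $\bW\sim\calN(\mathbf 0,2I_d)$; this independence is what makes the problem tractable.

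Since $T$ is a function of $\bW$ alone and $\bU\indep\bW$, conditioning on $\bW$ makes $\bC=T\bU$ a centered Gaussian with covariance $\tfrac{T^2}{2}I_d$. Hence the restriction of the law of $\bC$ to $\{T<\infty\}$ has a density $g_r$ given by
\[
g_r(y)=\int_{\{w_1>0,\,|\bw'|\le \frac{r}{\sqrt{1-r^2}}w_1\}} (\pi T^2)^{-d/2}e^{-|y|^2/T^2}\,(4\pi)^{-d/2}e^{-|\bw|^2/4}\,d\bw .
\]
I would then substitute $\bw'=\tfrac{r}{\sqrt{1-r^2}}\,w_1\,\bz$ with $\bz\in\R^{d-1}$, $|\bz|\le1$, which extracts a factor $\bigl(\tfrac{r}{\sqrt{1-r^2}}\bigr)^{d-1}w_1^{d-1}$ and, crucially, collapses the radicand to $W_1^2-(1-r^2)|\bW|^2=r^2w_1^2(1-|\bz|^2)$, so that $T\to 2/w_1$ as $r\to0$ for each fixed $(w_1,\bz)$.

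Multiplying by $r^{1-d}$ and letting $r\to0$ by dominated convergence then finishes the computation: $(1-r^2)^{-(d-1)/2}\to1$, the integrand converges pointwise with $T\to 2/w_1$ and $e^{-|\bw|^2/4}\to e^{-w_1^2/4}$, the $\bz$-integral collapses to the volume $\pi^{(d-1)/2}/\Gamma((d+1)/2)$ of the unit ball in $\R^{d-1}$, and the remaining radial integral is the Gamma integral $\int_0^\infty w^{2d-1}e^{-w^2(1+|y|^2)/4}\,dw=2^{2d-1}\Gamma(d)(1+|y|^2)^{-d}$. Collecting the constants reproduces the limiting defective density $\tfrac{\pi^{-(d+1)/2}\Gamma(d)}{2\Gamma(d/2+1/2)}(1+|y|^2)^{-d}$, and integrating over $A$ gives \eqref{m18.2}. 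As a consistency check, integrating this density over all of $\R^d$ matches $\lim_{r\to0}r^{1-d}p_{r,d}$ from \eqref{m4.2}.

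The main obstacle I anticipate is the justification of dominated convergence uniformly in $r$. For the inner integral this needs a lower bound on $T$ valid over the whole range of $w_1$: from $T\ge \tfrac{2(1-r)(1-r^2)}{w_1}$ one gets $(\pi T^2)^{-d/2}\le C\,w_1^{d}$ uniformly for small $r$, so the integrand is dominated by $C\,w_1^{2d-1}e^{-w_1^2/4}$, which is integrable; note $T\to0$ as $w_1\to\infty$, where the polynomial growth of $(\pi T^2)^{-d/2}$ is defeated by $e^{-w_1^2/4}$, while $T\to\infty$ as $w_1\to0$, making $(\pi T^2)^{-d/2}$ vanish. This also shows $r^{1-d}g_r$ is uniformly bounded, so the hypotheses that $A$ is bounded and bounded away from the origin supply the uniform dominating function needed to interchange $\lim_{r\to0}$ with $\int_A$ and to keep the limiting integrand integrable.
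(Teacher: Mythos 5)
Your proposal is correct: I checked the kinematic identity $\bC=T\bU$ (exact, since $\bX_1+\bX_2=\mathbf 0$ and the tangency point of two equal balls is the midpoint of their centers), the quadratic for $T$, the independence and covariances of $\bU$ and $\bW$, the change of variables $\bw'=\tfrac{r}{\sqrt{1-r^2}}w_1\bz$, the lower bound $T\ge 2(1-r)(1-r^2)/w_1$ used for domination, and the constant bookkeeping (the $\bz$-volume $\pi^{(d-1)/2}/\Gamma(\tfrac{d+1}{2})$ and the Gamma integral $2^{2d-1}\Gamma(d)(1+|y|^2)^{-d}$ combine to exactly $\tfrac{\pi^{-(d+1)/2}\Gamma(d)}{2\Gamma(d/2+1/2)}$). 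The paper's proof starts from the same center-of-mass decomposition and the same identity $\bC=T\,\ol\bV$, but then diverges: it exploits rotation invariance of $\bC$ given $T$, writes $T=\rho(\bV^c/|\bV^c|)/|\bV^c|$, recognizes $|\ol\bV|^2/|\bV^c|^2$ as an $F(d,d)$ random variable independent of the direction $\bV^c/|\bV^c|$, sends $\rho\to1$ to get the conditional law of $|\bC|$ given $T<\infty$, converts the radial density to a spatial one by dividing by the sphere area, and finally multiplies by the collision-probability asymptotics \eqref{m4.2} to recover the $r^{d-1}$ factor. Your route replaces the $F$-distribution and the appeal to Theorem \ref{collision-prob} by a direct Fubini computation of the (defective) density of $\bC$, with the $r^{d-1}$ scaling extracted explicitly by the change of variables; this makes the proof self-contained (it re-derives the total mass $\lim r^{1-d}p_{r,d}$ as a byproduct rather than quoting it) and in fact does not even need the hypothesis that $A$ is bounded away from the origin, since your uniform bound on $r^{1-d}g_r$ is independent of $y$. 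What you lose relative to the paper is the conceptual explanation of where the $t$-distribution comes from (a ratio of independent chi-squares) and the clean separation between the rotational symmetry statement and the radial computation.
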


\begin{remark}

(i) The distribution in \eqref{m18.2} is a special case of the multidimensional $t$-distribution; more precisely, it is a defective version of the multidimensional $t$-distribution.
The formula in \eqref{m18.2} agrees with \eqref{formula-1-dim}, except that the latter is an exact formula, unlike the asymptotic formula in \eqref{m18.2}.

(ii) The values of the coefficient $(1/2) \pi ^{-(d+1)/2}  \Gamma (d) /\Gamma(d/2+1/2)$ in \eqref{m18.2} for $d=2,\dots,11$  are
\begin{align}\label{m18.7}
\frac{1}{\pi ^2},\frac{1}{\pi ^2},\frac{4}{\pi ^3},\frac{6}{\pi ^3},\frac{32}{\pi ^4},\frac{60}{\pi ^4},\frac{384}{\pi ^5},\frac{840}{\pi ^5},\frac{6144}{\pi ^6},\frac{15120}{\pi ^6}.
\end{align}

(iii) We will show that the formula \eqref{m18.2} agrees with \eqref{m4.2} and \eqref{m18.3}-\eqref{m18.6} for $d=2,3$. 
Note that according to \eqref{m18.7}, the value of the coefficient $(1/2) \pi ^{-(d+1)/2}  \Gamma (d) /\Gamma(d/2+1/2)$  in \eqref{m18.2} is $1/\pi^2$ for $d=2,3$.
We compute the integral in \eqref{m18.2} for $d=2$ using polar coordinates, 
\begin{align*}
 \frac{r}{\pi^2} \iint_{\mathbb{R}^2} \frac{1}{(1+x^2+y^2)^2} dxdy 
 = \frac{r}{\pi^2}\int_0^\infty \int_{0}^{2\pi} \frac{\rho}{(1+\rho^2)^2}d\theta d\rho = \frac{r}{\pi}.
\end{align*}

In the case $d=3$, we
apply the following polar coordinates in $\mathbb{R}^3$,
\begin{equation*}
x = \rho\cos\theta_1, \quad y = \rho \sin\theta_1\cos\theta_2, \quad z = \rho\sin\theta_1\sin\theta_2, 
\quad \rho>0,\; \theta_1\in [0, \pi),\; \theta_2\in [0, 2\pi),
\end{equation*}
and substitution $u=\frac{1}{1+\rho^2}$
to obtain
\begin{align*}
& \frac{r^2}{\pi^2} \iint_{\mathbb{R}^3} \frac{1}{(1+x^2+y^2+z^2)^3} dxdydz 
= \frac{r^2}{\pi^2}\int_0^\infty \frac{\rho^2}{(1+\rho^2)^2} d\rho \int_{0}^{\pi} \sin \theta_1 d\theta_1\int_0^{2\pi} d\theta_2\\
&= \frac{r^2}{\pi^2}\cdot 2\cdot 2\pi\cdot \int_0^1 \frac{1}{2}\sqrt{u}\sqrt{1-u} du 
= \frac{r^2}{\pi^2}\cdot 2\cdot 2\pi\cdot B\left(\frac{3}{2}, \frac{3}{2} \right) 
= \frac{r^2}{4},
\end{align*}
where $B\left(\frac{3}{2}, \frac{3}{2} \right)$ denotes the Beta function.
As expected, \eqref{m18.2} agrees with \eqref{m4.2} and \eqref{m18.3}-\eqref{m18.6} in the cases $d=2,3$.
\end{remark}

\section{General moving sets}
\label{sec:higher:new}

A collision will occur if and only if the trajectory of $t\to(\bV_1,\bV_2)t$ passes through the set
\begin{align}\label{m21.1}
A:= \{(y +  z ,y- z)\in\R^{2d}: y\in \R^d,z\in \bM_2 \}. 
\end{align}
If $(y + z ,y- z)$ is the first point in $A$ encountered by $(\bV_1,\bV_2)t$, then $\bC=y$.
Let $A_+ =  A\cup\{\mathbf{0}\}$.

For an element $g$ of the orthogonal group $O(d)$, 
let $\calT_g : A_+ \to A_+$ be defined by $\calT_g(\mathbf{0})=\mathbf{0}$, and
for $(y +  z ,y- z)\in A$,
\begin{align*}
    \calT_g & (y +  z ,y- z) = (g(y) +  z ,g(y)- z).
\end{align*}
\begin{lemma}\label{m16.1}
     For every $g\in O(d)$, $\calT_g$ is an isometry on $A_+$.
\end{lemma}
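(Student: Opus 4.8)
The plan is to reduce the claim to the parallelogram identity together with the orthogonality of $g$. First I would settle well-definedness, which is the one point that needs care. The linear map $(y,z)\mapsto (y+z,\,y-z)$ from $\R^d\times \bM_2$ into $\R^{2d}$ is injective, because from a point $(a,b)\in A$ one recovers $y=(a+b)/2$ and $z=(a-b)/2$ uniquely. Hence each element of $A$ determines the pair $(y,z)$ unambiguously, and the formula for $\calT_g$ makes sense. The same computation shows $\calT_g$ maps $A_+$ into $A_+$: the image $(g(y)+z,\,g(y)-z)$ is again of the required form, with $g(y)\in\R^d$ and the \emph{same} $z\in\bM_2$, while $\mathbf{0}$ is fixed by definition.

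Next I would compute distances on $A$. For two points $P_i=(y_i+z_i,\,y_i-z_i)$, $i=1,2$, setting $u=y_1-y_2$ and $w=z_1-z_2$ and invoking $|u+w|^2+|u-w|^2=2|u|^2+2|w|^2$ yields
\[
|P_1-P_2|^2 = 2|y_1-y_2|^2 + 2|z_1-z_2|^2 .
\]
The crucial observation is that $\calT_g$ acts only on the $y$-coordinate and leaves $z$ untouched, so the analogous expansion for $\calT_g(P_1),\calT_g(P_2)$ is $2|g(y_1)-g(y_2)|^2+2|z_1-z_2|^2 = 2|g(y_1-y_2)|^2+2|z_1-z_2|^2$. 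Since $g\in O(d)$ satisfies $g^\top g=I$ it preserves the Euclidean norm, so $|g(y_1-y_2)|=|y_1-y_2|$ and the two squared distances coincide.

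It remains to handle pairs involving the basepoint. For $P=(y+z,\,y-z)\in A$ the same identity gives $|P-\mathbf{0}|^2=2|y|^2+2|z|^2$, whereas $|\calT_g(P)-\mathbf{0}|^2=2|g(y)|^2+2|z|^2=2|y|^2+2|z|^2$, again by orthogonality; and $\mathbf{0}\mapsto\mathbf{0}$ preserves the zero distance trivially. Thus $\calT_g$ preserves every pairwise distance on $A_+$. Finally, the composition law $\calT_{g_1}\circ\calT_{g_2}=\calT_{g_1 g_2}$ (immediate from the formula, since $z$ is carried through unchanged) gives $\calT_{g^{-1}}\circ\calT_g=\calT_{\id}=\id$, so $\calT_g$ is a bijection of $A_+$ and hence a genuine isometry.

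The main subtlety, and really the only one, is the well-definedness step: one must verify that the representation of a point of $A$ as $(y+z,\,y-z)$ with $z\in\bM_2$ is unique before the defining formula for $\calT_g$ can be applied. Once that is in place, everything else follows from the parallelogram law and $g^\top g=I$, with the separate treatment of the basepoint $\mathbf{0}$ being the only bookkeeping to remember.
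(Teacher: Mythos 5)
Your proof is correct and follows essentially the same route as the paper's: expand the squared distance via the parallelogram identity to get $2|y_1-y_2|^2+2|z_1-z_2|^2$, note that $\calT_g$ only moves the $y$-coordinate, and use $|g(y)|=|y|$, with the basepoint $\mathbf{0}$ handled separately. Your additional checks of well-definedness and bijectivity are sensible housekeeping that the paper leaves implicit, but they do not change the substance of the argument.
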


\begin{proof}
    
If  $y_1,y_2\in\R^d$ and $z_1,z_2\in \bM_2$  then
\begin{align*}
&|(y_1 +  z_1 ,y_1- z_1)- (y_2 +  z_2 ,y_2- z_2)|^2\\
&= |((y_1-y_2) +  (z_1-z_2) ,(y_1-y_2) -  (z_1-z_2))|^2\\
&= |y_1-y_2|^2 + 2  (y_1-y_2) \cdot (z_1-z_2) +  |z_1-z_2|^2\\
&\qquad + |y_1-y_2|^2 - 2  (y_1-y_2) \cdot (z_1-z_2) +  |z_1-z_2|^2\\
&= 2 |y_1-y_2|^2 + 2   |z_1-z_2|^2.
\end{align*}
Every $g\in O(d)$  is an isometry, so $|g(y_1)-g(y_2)| = |y_1-y_2|$ and, therefore,
\begin{align*}
&|\calT_g(y_1 +  z_1 ,y_1- z_1)- \calT_g(y_2 +  z_2 ,y_2- z_2)|^2\\
&=|(g(y_1) +  z_1 ,g(y_1)- z_1)- (g(y_2) +  z_2 ,g(y_2)- z_2)|^2\\
&= 2 |g(y_1)-g(y_2)|^2 + 2   |z_1-z_2|^2 = 2 |y_1-y_2|^2 + 2   |z_1-z_2|^2\\
&=|(y_1 +  z_1 ,y_1- z_1)- (y_2 +  z_2 ,y_2- z_2)|^2.
\end{align*}
This shows that $\calT_g$ is an isometry on $A$.

If  $y\in\R^d$ and $z\in \bM_2$  then
\begin{align*}
&|(y +  z ,y- z)- \mathbf{0}|^2=|(y +  z ,y- z)|^2
 = 2|y|^2 +2|z|^2.
\end{align*}
Since $|g(y)|=|y|$,
\begin{align*}
&|\calT_g(y + r z ,y-r z)- \mathbf{0}|^2
= 2|g(y)|^2 +2 |z|^2 = |(y + r z ,y-r z)- \mathbf{0}|^2.
\end{align*}
This proves that $\calT_g$ is an isometry on $A_+$.
\end{proof}

\begin{proof}[Proof of Theorem \ref{j2.1}]
    Suppose $B\subset \R^d$ and let
    \begin{align*}
A_B= \{(y +  z ,y- z)\in\R^{2d}: y\in B,z\in \bM_2\}. 
\end{align*}
The collision point $\bC $ is in $B$ if and only if $(\bV_1,\bV_2) T \in A_B$.
The distribution of $(\bV_1,\bV_2)/|(\bV_1,\bV_2)|$ is uniform on the unit sphere.
By Lemma \ref{m16.1}, for any $g\in O(d)$, the radial projection of $A_B$ on the unit sphere 
has the same surface area as the projection of $\calT_g(A_B)$. Hence, the probability
that $(\bV_1,\bV_2) t$ will intersect $A_B$ is the same as the probability that it will intersect $\calT_g(A_B)$.
It follows that $\dsP(\bC\in B) = \dsP(\bC\in g(B))$.
\end{proof}

\begin{remark}
For $B_1\subset \R^d$ and $B_2 \subset \bM_2$ let
    \begin{align*}
A_{B_1,B_2}= \{(y +  z ,y- z)\in\R^{2d}: y\in B_1,z\in B_2\}. 
\end{align*}
The argument used in the proof of Theorem \ref{j2.1} also shows that 
\begin{align*}
\dsP((\bV_1,\bV_2) T  \in A_{B_1,B_2})
= \dsP((\bV_1,\bV_2) T  \in A_{g(B_1),B_2}),
\end{align*}
for any $B_1\subset \R^d$, $B_2 \subset \bM_2$, and $g\in O(d)$.
\end{remark}

\section{Collision probability}\label{sec-col-prob}

\begin{proof}[Proof of Proposition \ref{collision-criterion}]

We will use $|\,\cdot\,|$ to denote the usual norm in the Euclidean space.

At time $s$, the centers of $\bM_1$ and $\bM_2$ will be $\bX_1+s\bv_1$ and $\bX_2+s\bv_2$, so to have a collision, we need 
$$|\left(\bX_1+s\bv_1\right) -\left(\bX_2+s\bv_2\right) | = 2r,$$
for some $s>0$. This is equivalent to
\begin{align}\notag
&\left[\left(\bX_1-\bX_2\right)+s\left(\bv_1-\bv_2\right)\right]^T\left[\left(\bX_1-\bX_2\right)+s\left(\bv_1-\bv_2\right)\right] = 4r^2,\\
 & |\bv_1-\bv_2|^2 s^2 + 2\left(\bv_1-\bv_2\right)^T\left(\bX_1-\bX_2\right)s+|\bX_1-\bX_2|^2-4r^2=0.
\label{3.1}
\end{align}
To make the above quadratic equation have a root in $(0, +\infty)$, we  need 
$$\Delta = 4\left[\left(\bv_1-\bv_2\right)^T\left(\bX_1-\bX_2\right)\right]^2-4|\bv_1-\bv_2|^2\left(|\bX_1-\bX_2|^2-4r^2\right)\geq 0,$$
which is equivalent to
\begin{align*}
&|\bv_1-\bv_2|^2|\bX_1-\bX_2|^2\cos^2\beta - |\bv_1-\bv_2|^2\left(|\bX_1-\bX_2|^2-4r^2\right)\geq 0,\\
 &\cos^2\beta-1+\frac{4r^2}{|\bX_1-\bX_2|^2}\geq 0.
\end{align*}
Since $|\bX_1-\bX_2| = 2$, the above condition is the same as
\begin{equation}\label{3.2}
\cos^2\beta\geq 1-r^2.
\end{equation}

If $s_1, s_2$ are the two roots of \eqref{3.1},  Vieta's formulas give
\begin{align*}
\begin{cases}
s_1+s_2 = -\frac{2\left(\bv_1-\bv_2\right)^T\left(\bX_1-\bX_2\right)}{|\bv_1-\bv_2|^2} = -\frac{2|\bX_1-\bX_2|}{|\bv_1-\bv_2|}\cos\beta,\\
s_1s_2 = \frac{|\bX_1-\bX_2|^2-4r^2}{|\bv_1-\bv_2|^2}>0.
\end{cases}
\end{align*}
Hence, for \eqref{3.1} to have a positive solution, we need $\cos\beta\leq 0$. 
This and  \eqref{3.2} yield the condition
$\cos\beta \leq -\sqrt{1-r^2}$.
\end{proof}

\begin{remark}
When $r$ is close to $0$, to have a collision, $\cos\beta$ should be close to $-1$. This means $\beta$ should be close to $\pi$, i.e., $\bX_1-\bX_2$ and $\bv_1-\bv_2$ must have almost the opposite directions. Hence, if $\bM_1$ and $\bM_2$ were to collide, $\bv_1-\bv_2$ should have almost the same direction as the positive half of the first axis. 
\end{remark}

\begin{proof}[Proof of Theorem \ref{collision-prob}]
Let
\begin{equation}\label{def-component}
\bV_1 = \left(v_{1,1}, v_{1,2}, \cdots, v_{1,d}\right), \quad\bV_2 = \left(v_{2,1}, v_{2,2},\cdots, v_{2,d}\right).
\end{equation}
Let $\beta$ be the angle between the vectors $\bX_1-\bX_2$ and $\bV_1-\bV_2$.
Given Proposition \ref{collision-criterion}, we only need to show
$$\dsP\left(\cos\beta\leq -\sqrt{1-r^2}\right) = \frac{1}{2}F\left(\frac{1}{d-1}\cdot\frac{r^2}{1-r^2}; d-1, 1\right).$$

Recall that $\bX_1 = (-1, 0, \cdots, 0)$, $ \bX_2 = (1, 0,\cdots, 0)$,
and  $F\left(\,\cdot\,; d-1, 1 \right)$ denotes the CDF of the $F$ distribution
with $d-1$ and 1 degrees of freedom.
We have
\begin{equation}\label{3.5}
\begin{split}
&\dsP\left(\cos\beta\leq -\sqrt{1-r^2}\right)\\
&= \dsP\left(\frac{-(v_{1,1}-v_{2,1})}{|\bV_1-\bV_2|}\leq -\sqrt{1-r^2}\right)\\
&= \dsP\left(v_{1,1}-v_{2,1}\geq \sqrt{1-r^2}|\bV_1-\bV_2| \right)\\
&=\dsP\left(\left(v_{1,1}-v_{2,1}\right)^2\geq (1-r^2)\sum_{i=1}^d \left(v_{1,i}-v_{2,i}\right)^2, v_{1,1}-v_{2,1}\geq 0 \right)\\
&=\dsP\left(r^2\left(v_{1,1}-v_{2,1}\right)^2\geq (1-r^2)\sum_{i=2}^d \left(v_{1,i}-v_{2,i}\right)^2, v_{1,1}-v_{2,1}\geq 0 \right)\\
&=\frac{1}{2}\dsP\left(\frac{\frac{1}{d-1}\sum_{i=2}^d \left(v_{1,i}-v_{2,i}\right)^2}{\left(v_{1,1}-v_{2,1}\right)^2}\leq \frac{1}{d-1}\cdot\frac{r^2}{1-r^2}\right)\\
&=\frac{1}{2}F\left(\frac{1}{d-1}\cdot \frac{r^2}{1-r^2}; d-1, 1 \right).
\end{split}
\end{equation}
To justify the last equality, we
note that $\{v_{1,i}-v_{2,i}\}_{i=1}^d$ are i.i.d. $\mathcal{N}(0,2)$, so the random variables $\sum_{i=2}^d \left(v_{1,i}-v_{2,i}\right)^2 $ and $ \left(v_{1,1}-v_{2,1}\right)^2$ are independent. Moreover, the distribution of $\frac14\sum_{i=2}^d \left(v_{1,i}-v_{2,i}\right)^2 $ is $\chi_2^{d-1}$, while the distribution of  $\frac14\left(v_{1,1}-v_{2,1}\right)^2$ is $\chi_2^1$. Hence, the last equality follows from the definition of the $F$ distribution.
This proves \eqref{m4.1}.

The PDF of $F$ distribution with $d-1$ and $1$ degrees of freedom is:
$$\frac{\Gamma\left(\frac{d}{2} \right)}{\Gamma\left(\frac{d-1}{2}\right)\sqrt{\pi}}\left(d-1\right)^{\frac{d-1}{2}}x^{\frac{d-3}{2}}\left(1+\left(d-1\right)x\right)^{-\frac{d}{2}}, \quad x>0.$$
This and \eqref{3.5} yield \eqref{m18.1}, i.e.,
\begin{equation*}
p_{r,d} = \frac{1}{2}\int_{0}^{\frac{1}{d-1}\cdot \frac{r^2}{1-r^2}} \frac{\Gamma\left(\frac{d}{2} \right)}{\Gamma\left(\frac{d-1}{2}\right)\sqrt{\pi}}\left(d-1\right)^{\frac{d-1}{2}}x^{\frac{d-3}{2}}\left(1+\left(d-1\right)x\right)^{-\frac{d}{2}} dx.
\end{equation*}
By L'H\^ opital's Rule, 
\begin{alignat*}{2}
\lim_{r\rightarrow 0}\;\frac{p_{r,d}}{r^{d-1}} &=\lim_{r\rightarrow 0}\; \frac{1}{2}\cdot \frac{\Gamma\left(\frac{d}{2} \right)}{\Gamma\left(\frac{d-1}{2}\right)\sqrt{\pi}}\left(d-1\right)^{\frac{d-1}{2}}\left(\frac{1}{d-1}\cdot \frac{r^2}{1-r^2}\right)^{\frac{d-3}{2}}\\
& \quad\quad\times \left(1+\frac{r^2}{1-r^2}\right)^{-\frac{d}{2}}\cdot \frac{1}{d-1}\cdot \frac{2r}{(1-r^2)^2}\cdot\frac{1}{(d-1)r^{d-2}}\\
&=  \lim_{r\rightarrow 0}\;  \frac{1}{d-1}\cdot \frac{\Gamma\left(\frac{d}{2} \right)}{\Gamma\left(\frac{d-1}{2}\right)\sqrt{\pi}}\left(1-r^2\right)^{-\frac{1}{2}}
=  \frac{1}{d-1}\cdot \frac{\Gamma\left(\frac{d}{2} \right)}{\Gamma\left(\frac{d-1}{2}\right)\sqrt{\pi}}.
\end{alignat*}
This proves \eqref{m4.2}.
\end{proof}

\section{Collision location in dimension $1$}
\label{sec-coplace-1-dim}

The 1-dimensional case is very easy due to 
the following well-known representation of the Cauchy distribution.

\begin{lemma}\label{cauchy}
Suppose the endpoint of a half-line is a fixed point $(a,b)$ with $b>0$, and its angle relative to the first axis is uniform in $(-\pi,0)$.  Then the first coordinate of the intersection point with the first axis  has the Cauchy distribution with the probability density function
$$f(x) = \frac{1}{\pi}\cdot \frac{b}{\left(x - a\right)^2 + b^2}.$$
\end{lemma}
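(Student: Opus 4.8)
The plan is to reduce the statement to an elementary change of variables for the uniform angle. First I would parametrize the half-line through $(a,b)$ by its direction $(\cos\theta,\sin\theta)$, where $\theta$ is uniform on $(-\pi,0)$, so that a generic point of the half-line is $(a+t\cos\theta,\,b+t\sin\theta)$ for $t\ge 0$. Because $b>0$ and $\sin\theta<0$ throughout $(-\pi,0)$, the half-line meets the first axis at a unique parameter value $t>0$, namely $t=-b/\sin\theta$. Substituting this value, the first coordinate of the intersection point is
\[
X=g(\theta):=a-b\cot\theta,
\]
so the lemma amounts to identifying the law of $g(\theta)$ when $\theta$ is uniform.

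Next I would check that $g$ is a strictly increasing bijection from $(-\pi,0)$ onto $\R$: on this interval $\cot\theta$ decreases from $+\infty$ (as $\theta\to-\pi^+$) through $0$ (at $\theta=-\pi/2$) to $-\infty$ (as $\theta\to 0^-$), hence $g$ increases from $-\infty$ to $+\infty$ and is invertible, with $\theta=g^{-1}(x)$ determined by $\cot\theta=(a-x)/b$. The concluding step is the one-dimensional change-of-variables formula for densities: since $\theta$ has density $1/\pi$ on $(-\pi,0)$ and $g'(\theta)=b\csc^2\theta=b\bigl(1+\cot^2\theta\bigr)$, evaluating $1+\cot^2\theta=1+(x-a)^2/b^2$ at $\theta=g^{-1}(x)$ yields
\[
f_X(x)=\frac{1}{\pi}\cdot\frac{1}{\lvert g'(g^{-1}(x))\rvert}=\frac{1}{\pi}\cdot\frac{b}{(x-a)^2+b^2},
\]
which is exactly the asserted Cauchy density. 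As an equivalent route one can compute the cumulative distribution function directly, writing $\dsP(X\le x)=\dsP\bigl(\cot\theta\ge (a-x)/b\bigr)$ using monotonicity and then differentiating in $x$.

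I expect no substantial obstacle: the result is a classical fact, and the computation above is routine. The only points demanding care are the sign bookkeeping — confirming that $\sin\theta<0$ forces the intersection time to be positive, and that the orientation of the interval $(-\pi,0)$ makes $g$ monotone — together with the verification that $g$ maps onto all of $\R$, so that no probability mass is lost and $f_X$ integrates to $1$.
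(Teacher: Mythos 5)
Your proof is correct and complete. The paper itself offers no proof of this lemma---it is stated as a ``well-known representation of the Cauchy distribution'' and used directly in the proof of Theorem \ref{density-1-dim}---so there is no argument to compare against; your change-of-variables computation (parametrizing the half-line, solving for the intersection parameter $t=-b/\sin\theta$, and pushing the uniform density on $(-\pi,0)$ through the increasing bijection $\theta\mapsto a-b\cot\theta$) is the standard derivation and correctly handles the sign and monotonicity issues, yielding exactly the stated density.
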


\begin{proof}[Proof of Theorem \ref{density-1-dim}]
Let $\bW(s) = \left(\bW_1(s), \bW_2(s)\right)$. The halfline $L:=\{\bW(s),s>0\}$ is the trajectory of the centers of the balls. The initial location is $\bW(0) = (-1, 1)$. Note that the ball $\bM_1$ is always to the left side of ball $\bM_2$, so if the collision time $T$ is finite then
$$\bW_2(T) - \bW_1(T) =\left(\bX_2 + T\bV_2\right) - \left(\bX_1 + T\bV_1\right) = 2r,$$
which is equivalent to $L$ intersecting the line $\ell:=\{(x,y): y-x = 2r\}$. 

Since $\bV_1, \bV_2$ are distributed as i.i.d. standard normal, the direction of $\bW(s)$ is uniform on the circle.

The transformation $\calT: (x, y) \mapsto (y+x, y-x)$
takes $\ell$ to the line $\ell_1:=\{(x,y):y=2r\}$ and $\bW(0)$ to the point $(0, 2)$. The distance between $(0, 2)$ and  $\ell_1$ is $2-2r$. Applying Lemma \ref{cauchy}, the fact that trajectory hits the line $\ell$ with probability $1/2$ and $\bW_2(T) = \bW_1(T) +2r$, we get
\begin{align*}
\dsP\left(\bW_1(T)\leq x \right) &= \dsP\left(\bW_1(T)+\bW_2(T)\leq 2x + 2r \right) 
= \frac{1}{2}\int_{-\infty}^{2x+2r} \frac{1}{\pi}\cdot \frac{2-2r}{u^2+ (2-2r)^2} du\\
&= \int_{-\infty}^{x} \frac{1}{2\pi}\cdot \frac{1-r}{\left(u+r\right)^2+\left(1-r\right)^2} du.
\end{align*}
This implies \eqref{formula-1-dim} because  $\bC = \bW_1(T)+r$ if $d=1$.
\end{proof}

\section{Collision location in dimensions $d\geq 2$}
\label{sec:higher2}

\begin{proof}[Proof of Theorem \ref{thm_high_dim}]

We will decompose the motion into the motion of the center of mass and 
the motion relative to the center of mass. Recall \eqref{def-component} and
for $k=1,\dots,d$ let 
\begin{align*}
    \ol v_k &= (v_{1,k}+v_{2,k})/2,\\
    v_k^c &= (v_{1,k}-v_{2,k})/2,\\
    \ol\bV &= (\ol v_1, \dots, \ol v_d),\\
    \bV^c &= (v_1^c, \dots, v_d^c).
\end{align*}
Since the random variables $v_{1,1},\dots , v_{1,d}, v_{2,1},\dots,v_{2,d}$ are i.i.d. standard normal, the vectors $\ol\bV$ and $\bV^c$ are independent mean zero normal. The covariance matrix for each of these vectors is the identity matrix times  $1/2$. 

The collision time $T$ is determined by $\bV^c$ because
\begin{align*}
    T= \inf\{t>0: 0 \in \bM_1 + \bV^c t\}= \inf\{t>0: 0 \in \bM_2 - \bV^c t\}.
\end{align*}
Then $\bC= \ol\bV T$. Conditionally on $T$, the distribution of $\bC$ is rotationally invariant
by independence of $\ol\bV$ and $\bV^c$ and rotation invariance of $\ol \bV$. It follows that the (unconditional) distribution of $\bC$
is rotationally invariant.

For $x\in\R^n$ and $a\geq 0$, let $\calS_n(x,a)=\{y\in\R^n: |y-x|=a\}$. 
For $z\in \calS_d(0,1)$, let 
\begin{align*}
    \rho(z) = \inf\{b>0: 0 \in \bM_1 + z b\}.
\end{align*}
Then 
\begin{align*}
    T= \frac{\rho(\bV^c/|\bV^c|)}{|\bV^c|}.
\end{align*}

Random variables $\bV^c/|\bV^c|$ and $|\bV^c|$ are independent.
Thus, the conditional distribution of $\bC$ given $\{\rho(\bV^c/|\bV^c|)=u\}$
is that of $u \ol \bV /|\bV^c|$. Therefore, 
the conditional distribution of $|\bC|^2$ given $\{\rho(\bV^c/|\bV^c|)=u\}$
is that of $u^2 |\ol \bV|^2 /|\bV^c|^2$. 
The random variables $\sqrt{2} |\ol \bV|^2 $ and $\sqrt{2}|\bV^c|^2$ are $\chi^2_d$
so  $ |\ol \bV|^2 /|\bV^c|^2$ has the $F$-distribution with $d$ and $d$ degrees
of freedom. We have
\begin{align*}
    \dsP(|\bC| < a ) &= \dsP(|\bC|^2 < a^2 )
    = \int \dsP(|\bC|^2 < a^2 \mid \rho(\bV^c/|\bV^c|)=u) \dsP(\rho(\bV^c/|\bV^c|)\in du)\\
    &= \int \dsP(u^2 |\ol \bV|^2 /|\bV^c|^2 < a^2 \mid \rho(\bV^c/|\bV^c|)=u) \dsP(\rho(\bV^c/|\bV^c|)\in du)\\
    &= \int \dsP( |\ol \bV|^2 /|\bV^c|^2 < a^2/u^2 ) \dsP(\rho(\bV^c/|\bV^c|)\in du).
\end{align*}
The density of the $F$-distribution with $d$ and $d$ degrees of freedom is
\begin{align*}
    f(x) = \frac{x^{d/2-1}}{B(d/2,d/2)(1+x)^d}
\end{align*}
so
\begin{align*}
    \dsP( |\ol \bV|^2 /|\bV^c|^2 < a^2/u^2 )
    &= \int _0^{a^2/u^2} \frac{x^{d/2-1}}{B(d/2,d/2)(1+x)^d} dx,\\
    \dsP(|\bC| < a ) &= 
     \int \int _0^{a^2/u^2} \frac{x^{d/2-1}}{B(d/2,d/2)(1+x)^d} dx \dsP(\rho(\bV^c/|\bV^c|)\in du).
\end{align*}

When $r\to0$, $\rho(z)$ converges uniformly to 1 on the set where it is finite.
Hence, $\rho(\bV^c/|\bV^c|)\to 1$ on the event $T<\infty$. 
It follows that 
\begin{align*}
  \lim_{r\to0}  \dsP(|\bC| < a )/\dsP(T<\infty) &= 
      \int _0^{a^2} \frac{x^{d/2-1}}{B(d/2,d/2)(1+x)^d} dx .
\end{align*}
We have
\begin{align*}
    \frac{d}{da} \int _0^{a^2} \frac{x^{d/2-1}}{B(d/2,d/2)(1+x)^d} dx
    = 2a\frac{a^{d-2}}{B(d/2,d/2)(1+a^2)^d}
    = \frac{2a^{d-1}}{B(d/2,d/2)(1+a^2)^d}.
\end{align*}
The above formula gives us the density of the
asymptotic distribution of $|\bC|$ given $T<\infty$. To transform
this formula into the density of $\bC$, we have to  divide it by the surface area of $\calS_d(0,a) $, i.e., $2\pi^{d/2}a^{d-1}/\Gamma(d/2)$.
The density of the
asymptotic distribution of $\bC$ given $T<\infty$ is
\begin{align*}
    \frac{2|x|^{d-1}}{B(d/2,d/2)(1+|x|^2)^d} \cdot \frac{\Gamma(d/2)}{2\pi^{d/2}|x|^{d-1}}
    &= \frac{\Gamma(d/2)}{\pi^{d/2}B(d/2,d/2)}\cdot\frac{1}{(1+|x|^2)^d}\\
    &= \frac{\Gamma(d)}{\pi^{d/2}\Gamma(d/2)}\cdot\frac{1}{(1+|x|^2)^d}.
\end{align*}
at $x\in\R^d$.
 Suppose $d\geq2$ and  $A\subset \mathbb{R}^d$ is a bounded Borel set with a positive distance from the origin.  Then
\begin{equation*}
\lim_{r\to 0}
\dsP(\bC\in A) /\dsP(T<\infty) = \frac{\Gamma(d)}{\pi^{d/2}\Gamma(d/2)}
 \int_{A}  \frac{1}{\left(1+|x|^2 \right)^d} d x .
\end{equation*}
Elementary calculations show that this and \eqref{m4.2} yield \eqref{m18.2}.
\end{proof}

\section{Acknowledgments}

We thank W.~Bryc, J.~Wellner, S.~Steinerberger and J.~Weso\l owski for the most useful advice.

\def\cprime{$'$}

\end{document}